\newcommand{\be}{\beta}
\newcommand{\de}{\delta}
\newcommand{\e}{\varepsilon}
\newcommand{\CC}{{\mathbb{C}}}
\newcommand{\HH}{{\mathbb{H}}}
\newcommand{\FF}{{\mathbb{F}}}
\newcommand{\PP}{{\mathbb{P}}}
\newcommand{\RR}{{\mathbb{R}}}
\newcommand{\ZZ}{{\mathbb{Z}}}
\newcommand{\NN}{{\mathbb{N}}}
\newcommand{\op}{\operatorname}
\newcommand{\Sp}{\op{Sp}}
\def\diag{\operatorname{diag}}
\def\tr{{\rm Tr}}
\def\det{{\rm det}}
\theoremstyle{plain}
\newtheorem{thm}{Theorem}
\newtheorem*{thm*}{Theorem}
\newtheorem{prop}[thm]{Proposition}
\newtheorem{cor}[thm]{Corollary}
\newtheorem*{cor*}{Corollary}
\theoremstyle{definition}
\newcommand\tc[2]{\vartheta\left[\begin{smallmatrix}#1\\ #2\end{smallmatrix}\right]}
\newcommand{\sm}[2]{\left(\begin{smallmatrix}#1\\#2\end{smallmatrix}\right)}
\newcommand{\ch}[2]{\left[\begin{smallmatrix}#1\\#2\end{smallmatrix}\right]}
\begin{document}

\title{On isomorphisms between Siegel modular threefolds}

\author{Sara Perna}
\address{\footnotesize{Universit\`a degli Studi di Roma ``La Sapienza'', \\Dipartimento di Matematica ``Guido Castelnuovo'', \\Piazzale A. Moro 5, 00185, Roma, Italia.}}
\email{perna@mat.uniroma1.it}

\begin{abstract}
The present paper is motivated by Mukai's ``\emph{Igusa quartic and Steiner surfaces}''. There he proved that the Satake compactification of the moduli space of principally polarized abelian surfaces with a level two structure has a degree 8 endomorphism. This compactification can be viewed as a Siegel modular threefold. The aim of this paper is to show that this result can be extended to other modular threefolds.

The main tools are Siegel modular forms and Satake compactifications of arithmetic quotients of the Siegel upper-half space. Indeed, the construction of the degree 8 endomorphism on suitable modular threefolds is done via an isomorphism of graded rings of modular forms. 

By studying the action of the Fricke involution one gets a further extension of the previous result to other modular threefolds. 

The possibility of a similar situation in higher dimensions is discussed at the end of the paper.

\medskip
\noindent \textbf{Keywords.} Theta functions, Graded rings of modular forms, Siegel modular varieties, Rationality problems.
\end{abstract}
\maketitle

\section*{Introduction}

The Satake compactification of the moduli space of principally polarized abelian varieties with level 2 structure is known to be a quartic hypersurface in $\PP^4$. It is called the Igusa quartic. Recently this space has been characterized as a Steiner hyperquartic, as such it has a degree 8 endomorphism. By means of this characterization the Satake compactification  of the  moduli space of  principally polarized abelian  varieties  with  G\"opel triples is isomorphic  to the Igusa quartic~\cite{mukai}. These compactifications can be thought as Siegel modular threefolds and the latter isomorphism can be given by means of Siegel modular forms (cf.~\cite[section 11]{cvg}). 

The aim of this paper is to show that this result can be extended to other modular threefolds. The construction of a degree 8 endomorphism on suitable Siegel modular threefolds will be done via an isomorphism of graded rings of Siegel modular forms and a degree 8 map between two given Siegel modular threefolds. 

Set 
$\HH_g:=\Set{\tau\in\op{M}_{g\times g}(\CC)\mid {}^t\tau=\tau,\;\op{Im}\tau>0}.$
Since $\HH_1$ is the upper-half plane of complex numbers with positive imaginary part, $\HH_g$ is usually called the Siegel upper-half space of degree, or genus, $g$. It is a complex manifold of dimension $g(g+1)/2$. 

As a generalization of M\"obius transformation on the complex plane, preserving the upper-half plane, the group of integral symplectic matices $\Gamma_g$ acts on $\HH_g$ as follows:
\begin{equation}\label{symplectic action}
\gamma\cdot\tau=(A\tau+B)(C\tau+D)^{-1},\;\forall\gamma=\sm{A&B\\}{C&D}\in\Gamma_g,
\end{equation}
where $A,\,B,\,C,\,D$ are $g\times g$ matrices satisfying suitable equations. This block notation for an element of the integral symplectic group will be used throughout the paper.

Let $\Gamma$ be a subgroup of finite index in $\Gamma_g$. It still acts on $\HH_g$ by the same formula~\eqref{symplectic action}. The orbit space $\HH_g/\Gamma$ is called an arithmetic quotient of the Siegel upper-half space. It is smooth if $\Gamma$ acts freely, and it has at most finite quotient singularities in any case. Arithmetic quotients of $\HH_g$ arise naturally in the study of principally polarized abelian varieties over $\CC$ of a given dimension. For example the arithmetic quotient of $\HH_g$ with respect to the full integral symplectic group is known to be the moduli space of principally polarized abelian varieties of dimension $g$.

A singular compactification of these arithmetic quotients is done by means of Siegel modular forms (cf. section~\ref{sec: modular forms}), whose space can be given the structure of a graded ring. Roughly speaking Siegel modular forms are holomorphic functions on $\HH_g$ that behave nicely under the action of the integral symplectic group or its subgroups. This kind of compactification is known as Satake compactification. 

Let us outline the structure of the paper.
First we will consider the rings of Siegel modular forms (with multiplier) with respect to the following two congruence subgroups of level 4 of the integral symplectic group:
\begin{equation}\label{gamma24}
\Gamma_2(2,4)=\Set{\gamma\in\Gamma_2\,\mid\,\gamma\equiv1_4\text{ (mod 2)},\, \diag(B)\equiv\diag(C)\equiv0\text{ (mod 4)}},
\end{equation}
and
\begin{equation}\label{gamma224}
\Gamma_2^2(2,4)=\{\gamma\in\Gamma_2\,|\,
\gamma\equiv\sm{1_2&*}{0&1_2} \text{ (mod 2)}
,\,\diag(2B)\equiv\diag(C)\equiv0\text{ (mod 4)}\},
\end{equation}
where $1_n$ is the $n\times n$ identity matrix and $\op{diag}(M)$ denotes the diagonal of the \mbox{matrix $M$.}  
The computation of such rings will led to the interpretation of the degree 8 map
\begin{equation}\label{square}
\begin{split}
\PP^3&\to\PP^3\\
[x_0, x_1, x_2, x_3] & \mapsto  [x_0^2, x_1^2, x_2^2, x_3^2]
\end{split}
\end{equation} 
as a map between the modular varieties related to these two groups:
\begin{equation}\label{P_3 squared}
\psi:\op{Proj}( A(\Gamma_2(2,4))) \to  \op{Proj}( A(\Gamma_2^2(2,4))).
\end{equation}

This is the part that will give the right degree of the desired self map between some given modular threefolds. The other part, that is the isomorphism of suitable rings of modular forms, involves two more groups, namely the level 2 subgroups
\begin{align}\label{gamma0}
\Gamma_0(2)&=\Set{\gamma\in\Gamma_2\,|\, C\equiv0\text{ (mod 2)}},\\ \label{gamma00}
\Gamma_0^0(2)&=\Set{\gamma\in\Gamma_2\,|\,C\equiv B\equiv0\text{ (mod 2)}}.
\end{align}
We shall prove that there is indeed an isomorphism
\[\Gamma_0(2)/\Gamma_2^2(2,4)\cong\Gamma_0^0(2)/\Gamma_2(2,4),\]
equivariant with respect to the action of the groups on the two copies of $\PP^3$ in~\eqref{P_3 squared}. 
Denoting this group by $G$ we'll establish the following theorem.
\begin{thm*}  For any subgroup $H\subset G$ there exists an isomorphism of graded rings of modular forms
\[\Phi_{H}:A(\Gamma)\to A(\Gamma'),\]
where $
\Gamma_2(2,4)\subset\Gamma\subset\Gamma_0^0(2)$, 
$\Gamma_2^2(2,4)\subset\Gamma'\subset\Gamma_0(2)$
and the quotients $\Gamma/\Gamma_2(2,4) $ and $\Gamma'/\Gamma_2^2(2,4)$ are both isomorphic to $H$.
\end{thm*}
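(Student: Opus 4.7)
The plan is to deduce all isomorphisms $\Phi_H$ from a single $G$-equivariant graded ring isomorphism $\Phi\colon A(\Gamma_2(2,4))\to A(\Gamma_2^2(2,4))$ by restriction to $H$-invariants. Given $H\subset G$, I set $\Gamma\subset\Gamma_0^0(2)$ and $\Gamma'\subset\Gamma_0(2)$ to be the preimages of $H$ under the two projections onto $G$. Then $A(\Gamma)=A(\Gamma_2(2,4))^H$ and $A(\Gamma')=A(\Gamma_2^2(2,4))^H$ by general invariant theory for rings of modular forms, so that $\Phi_H$ is obtained simply as the restriction of $\Phi$ to $A(\Gamma)$; that this restriction lands in $A(\Gamma')$ is automatic from the $G$-equivariance of $\Phi$.

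To produce $\Phi$ itself, I would exploit the explicit presentations of the two rings recorded earlier in the paper. Both $A(\Gamma_2(2,4))$ and $A(\Gamma_2^2(2,4))$ are polynomial rings in four theta-type generators of matching weight; write these as $X_0,\dots,X_3$ and $Y_0,\dots,Y_3$, chosen so that the squaring map $\psi$ corresponds at the level of graded rings to $Y_i\mapsto X_i^2$. A linear identification of the two four-dimensional generating spaces then extends uniquely to a graded ring isomorphism $\Phi$, and the remaining task is to choose this linear identification so that $\Phi$ intertwines the actions of $\Gamma_0^0(2)$ and $\Gamma_0(2)$ on the two rings. Both actions factor through the isomorphic quotients, and the identification $G\cong G$ is equivariant with respect to the actions on the two copies of $\PP^3$, so the two induced four-dimensional representations of $G$ on $\{X_i\}$ and on $\{Y_i\}$ must be equivalent; realising a concrete intertwiner is the heart of the argument.

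The principal obstacle is exactly this equivariance step. Matching the two representations of $G$ requires a careful analysis of how $G$ permutes (possibly with signs) the chosen theta-type generators on each side: one has to keep track not only of the linear action on the generators but also of the multiplier systems, since the rings under consideration are rings of modular forms with multiplier. The compatibility is ultimately forced by the equivariance of $\psi$ under the isomorphism $G\cong G$ stated just before the theorem, and I expect that a judicious choice of generators will present the two representations in forms that can be identified explicitly, yielding the equivariant $\Phi$ and hence all $\Phi_H$ at once.
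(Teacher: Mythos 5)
Your high-level strategy is the same as the paper's: realise every $\Phi_H$ as the restriction to $H$-invariants of a single $G$-equivariant isomorphism between the two four-variable polynomial rings, with $\Gamma$ and $\Gamma'$ the preimages of $H$. But the step you yourself flag as ``the heart of the argument'' --- producing the isomorphism of quotient groups and the intertwiner between the two four-dimensional representations --- is exactly the entire content of the paper's proof, and you leave it as an expectation rather than carrying it out. As written, your argument is also slightly circular: you invoke ``the equivariance of $\psi$ under the isomorphism $G\cong G$ stated just before the theorem'' to force the existence of the intertwiner, but that equivariance statement is itself what has to be proved; it is not available as an input.

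Concretely, the paper fills this gap as follows. The isomorphism $\varphi\colon\Gamma_0(2)/\Gamma_2^2(2,4)\to\Gamma_0^0(2)/\Gamma_2(2,4)$ is defined by writing a representative as $\sm{1&0}{CA^{-1}&1}\sm{A&0}{0&{}^tA^{-1}}\sm{1&A^{-1}B}{0&1}$ and replacing $B$ by $2B$. One then uses the fact (from Freitag--Salvati Manni) that $\Gamma_0(2)$ is generated by the matrices ${}^t\gamma_{2S}$, $\gamma'=\sm{A&0}{0&{}^tA^{-1}}$ and $\gamma_S$, and computes the slash action on both sides: ${f_a}_{\mid_{{}^t\gamma_{2S}}}=f_{a-\diag(S)}$ versus ${f_a^2}_{\mid_{{}^t\gamma_{2S}}}=f_{a-\diag(S)}^2$, ${f_a}_{\mid_{\gamma'}}=f_{Aa}$ versus ${f_a^2}_{\mid_{\gamma'}}=f_{Aa}^2$, and ${f_a}_{\mid_{\gamma_{2S}}}=i^{{}^ta2Sa}f_a$ versus ${f_a^2}_{\mid_{\gamma_S}}=i^{{}^ta2Sa}f_a^2$. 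These identities show that no change of basis is needed at all: the natural assignment $f_a\mapsto f_a^2$ is already an isomorphism of $G$-modules once the two quotients are identified via $\varphi$, and this includes the bookkeeping of the multiplier systems $v_\Theta$ and $v_\Theta^2$ (so the resulting $\Phi_H$ doubles weights, sending $[\Gamma,k/2,v_\Theta]$ to $[\Gamma',k,v_\Theta^2]$; your phrase ``generators of matching weight'' is off for the same reason, though harmless). Without some such explicit verification your proof is incomplete, since nothing a priori guarantees that the two permutation-with-signs representations of $G$ on $\{f_a\}$ and $\{f_a^2\}$ are equivalent, let alone equivalent via the index-preserving map compatible with the squaring morphism $\psi$.
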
 

With a suitable choice of the subgroup $H$ one can actually recover Mukai's result. Indeed by the above theorem and the modular interpretation of the map~\eqref{square} the following corollary follows.
\begin{cor*}
For any subgroup $\Gamma_2(2,4)\subset\Gamma'\subset\Gamma_0^0(2)$  the modular variety $\op{Proj}(A(\Gamma'))$ has a map of degree 8 onto itself.
\end{cor*}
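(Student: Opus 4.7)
The plan is to combine the theorem with the modular interpretation of the squaring map~\eqref{P_3 squared}. Given $\Gamma_2(2,4)\subset\Gamma'\subset\Gamma_0^0(2)$ as in the statement, I would set $H:=\Gamma'/\Gamma_2(2,4)$, which under the identification $\Gamma_0^0(2)/\Gamma_2(2,4)\cong G$ becomes a subgroup of $G$. Applying the theorem to this $H$ produces a partner group $\Gamma''$ with $\Gamma_2^2(2,4)\subset\Gamma''\subset\Gamma_0(2)$ and $\Gamma''/\Gamma_2^2(2,4)\cong H$, together with an isomorphism of graded rings $\Phi_H:A(\Gamma')\to A(\Gamma'')$. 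Passing to Proj, this yields an isomorphism of modular varieties $\bar\Phi_H:\op{Proj}(A(\Gamma''))\to\op{Proj}(A(\Gamma'))$.

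Next I would descend the degree 8 map $\psi$ of~\eqref{P_3 squared}. The crucial input is the equivariance, stated in the paper, of the identification $\Gamma_0^0(2)/\Gamma_2(2,4)\cong\Gamma_0(2)/\Gamma_2^2(2,4)$ with respect to the actions on the two copies of $\PP^3$ in~\eqref{P_3 squared}. In particular $\psi$ is $H$-equivariant. Using the standard fact that $A(\Gamma')$ (resp.\ $A(\Gamma'')$) is the ring of $H$-invariants in $A(\Gamma_2(2,4))$ (resp.\ $A(\Gamma_2^2(2,4))$), the quotient of $\psi$ by $H$ is a well-defined morphism
\[
\bar\psi:\op{Proj}(A(\Gamma'))\to\op{Proj}(A(\Gamma'')).
\]
Since the two quotient maps have the same generic degree $|H|$, multiplicativity of degrees in the resulting commutative square forces $\deg\bar\psi=\deg\psi=8$.

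The desired self-map of $\op{Proj}(A(\Gamma'))$ is then the composition $\bar\Phi_H\circ\bar\psi$. Its degree is $8$, and surjectivity follows because $\psi$ is a finite surjection and $\bar\Phi_H$ is an isomorphism. The substantive work has already been packaged into the theorem and into the modular interpretation~\eqref{P_3 squared}; the only really new thing to check here is that forming $H$-quotients commutes with taking Proj and preserves degrees, which is standard once one knows the $H$-action on $\op{Proj}(A(\Gamma_2(2,4)))$ is generically free. I therefore expect the main obstacle to lie not in the corollary itself but upstream, in making precise the equivariance linking $G$ to the two projective spaces in~\eqref{P_3 squared}; once this is in place the corollary is essentially formal.
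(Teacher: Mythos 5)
Your argument is exactly the one the paper intends: the paper states the corollary as an immediate consequence of Theorem~\ref{prop gamma02} together with the modular interpretation of the squaring map $\psi$, and your proposal simply spells out that composition (take $H=\Gamma'/\Gamma_2(2,4)$, descend $\psi$ to the $H$-quotients, and compose with the isomorphism $\bar\Phi_H$). The details you flag — equivariance of $\psi$ and the degree count for the quotient map — are indeed the only things to check, and they follow from the paper's statement that $f_a\mapsto f_a^2$ is an isomorphism of $G$-modules and from the generic freeness of the $\Gamma/\Gamma_2(2,4)$-action.
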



The previous corollary turns out to be true also for other modular threefolds. This will be achieved by  means of the action of the Fricke involution 
\[J_2=\tfrac{1}{\sqrt{2}}\sm{0&1_2}{-2\,1_2&0}.\]

The last section of the paper is dedicated to the case of genus three, with a view toward the situation in higher dimensions. The arguments exploited in genus two do not generalize directly.

\section{Siegel modular varieties}

In this section we recall the definition and properties of Siegel modular forms and modular varieties in order to fix notations.  For a survey article on Siegel modular varieties we refer to~\cite{HS}.
\subsection{Congruence subgroups of the integral symplectic group}
Set $g\in\NN$, $g>1$. Let
\begin{equation}\label{def_sympl}\Sp(2g,\RR)=\Set{\gamma\in \text{M}_{2g\times 2g}(\RR)\,\mid\,{}^t\gamma J\gamma=J},\;\;\text{where}\;\;
J=\sm{0&1_g}{-1_g&0}
,\end{equation}
be the symplectic group of degree $g$. The subgroup
\[\Gamma_g:=\Sp(2g,\ZZ)=\Sp(2g,\RR)\cap \text{M}_{2g\times 2g}(\ZZ)\]
is called the integral symplectic group of degree $g$. We will use a standard block notation for the elements of the integral symplectic group, that is
\[\gamma=\sm{A&B}{C&D}\in\Gamma_g,\] 
where $A,\,B,\,C,\,D\in\text{M}_{g\times g}(\ZZ)$. With these notations it is easily checked that the blocks of a symplectic matrix satisfy the relations
\begin{equation}\label{def symplectic}
\begin{cases}
A\,{}^tB=B\,{}^tA,\\
C\,{}^tD=D\,{}^tC,\\
A\,{}^tD-B\,{}^tC=1_g
\end{cases};\;\;
\begin{cases}
{}^tA\,C={}^tC\,A,\\
{}^tB\,D={}^tD\,B,\\
{}^tA\,D-{}^tC\,B=1_g
\end{cases}.
\end{equation}

In the theory of Siegel modular forms the following subgroups of $\Gamma_g$ are quite remarkable. The principal congruence subgroup of level $n$ of $\Gamma_g$ (the kernel of reduction modulo $n$) is denoted by $\Gamma_g(n)$.
A subgroup $\Gamma\subset\Gamma_g$ such that $\Gamma_g(n)\subset\Gamma$ for some $n\in\NN$ is called a congruence subgroup of level $n$. 
Examples of level $2n$ subgroups are given by the groups
\[\Gamma_g(n,2n)=\{\gamma\in\Gamma_g(n)\;|\;
\mathrm{diag}(\,{}^tA\,C)\equiv\mathrm{diag}(\,{}^tB\,D)\equiv0\;\text{(mod\,} 2n \text{)}\}.\]


\noindent For even values of the level there is a simpler description of such subgroups. It is easily seen that for $\gamma\in\Gamma_g(2m)$ 
\[\begin{cases}
\diag(\,{}^tA\,C)\equiv\diag(C)\;\text{(mod\,} 4m \text{)}
\\
\diag(\,{}^tB\,D)\equiv\diag(B)\;\text{(mod\,} 4m \text{)}\end{cases},\]
hence 
$\Gamma_g(2m,4m)=\{\gamma\in\Gamma_g(2m)\;|\;
\mathrm{diag}(B)\equiv\mathrm{diag}(C)\equiv0\;\text{(mod\,} 4m \text{)}\}.$
Furthermore, for any $m\in\NN$, $\Gamma_g(2m,4m)$ is a normal subgroup of $\Gamma_g$. 

\subsection{Siegel modular forms with multiplier and modular varieties}\label{sec: modular forms}
Throughout the paper we will study rings of Siegel modular forms with multiplier with respect to some congruence subgroups of $\Gamma_g$. 
For a positive integer $k$, a multiplier system of weight $k/2$ for a congruence subgroup $\Gamma$ is a map $v\colon\Gamma\to\CC^\ast$ such that if we define
\begin{align*}
\varphi:\Gamma\times\HH_g\to&\;\CC^*\\
(\gamma,\tau) \mapsto&\;\varphi(\gamma,\tau):=v(\gamma)\det(C\tau+D)^{k/2},
\end{align*}
then it satisfies the cocycle condition
\[\varphi(\gamma\be,\tau)=\varphi(\gamma,\be\cdot\tau)\varphi(\be,\tau)\]
for all $\gamma,\,\beta\in\Gamma$ and $\tau\in\HH_g$. A multiplier system of integral weight is a character for $\Gamma$.
A Siegel modular form of weight $k$/2 with respect to $\Gamma$ and the multiplier system $v$ is a holomorphic function $f\colon\HH_g\to\CC$ such that
\begin{equation*}
f(\gamma\cdot\tau)=v(\gamma)\det(C\tau+D)^{k/2} f(\tau),\quad \forall\gamma\in\Gamma.
\end{equation*}
Equivalently,
\begin{equation*}
f_{|\gamma,k/2,v}(\tau)=f(\tau),\;\forall\gamma\in\Gamma,\,\forall\tau\in\HH_g,
\end{equation*}
where
\begin{equation}\label{action2}
f_{|\gamma,k/2,v}(\tau)=v(\gamma)^{-1}\det(C\tau+D)^{-k/2}f(\gamma\cdot\tau).
\end{equation}
Sometimes, to ease notation, the multiplier system and the weight will be omitted in~\eqref{action2}. The set $[\Gamma,k/2,v]$ of such functions is known to be a finite dimensional vector space. If $v$ is a multiplier system of weight $1/2$, the ring of Siegel modular forms with respect to the group $\Gamma$ and the multiplier system $v$ is the graded ring
\[A(\Gamma,v)=\bigoplus_{k\in\NN}[\Gamma,k/2,v^k].\]

In addition, $A(\Gamma,v)$ is known to be a normal integral domain of finite type over $\CC$. 
The complex projective variety $\op{Proj}(A(\Gamma,v))$ is called the modular variety associated to $\Gamma$. It contains the arithmetic quotient $\HH_g/\Gamma$ as a Zariski open set, so it is natural to define its compactification as
\[\overline{\HH_g/\Gamma}=\op{Proj}(A(\Gamma,v)).\]
This is also known as the Satake compactification of the arithmetic quotient $\HH_g/\Gamma$. This compactification does not depend on the multiplier system chosen and if we let
\[A(\Gamma,v)^{(d)}=\bigoplus_{k\equiv0\,(\text{mod}\,d)}[\Gamma,k/2,v^k],\]
then $\op{Proj}(A(\Gamma,v))\cong \op{Proj}(A(\Gamma,v)^{(d)})$.

\subsection{Theta constants}\label{sec: theta con}
Theta constants with characteristics are classical examples of modular forms with multiplier if we consider their transformation under the action of suitable congruence subgroups of the integral symplectic group. 
Given a theta characteristic, that is a column vector $m=\ch {m'}{m''}\in\FF_2^{2g}$, the theta function with characteristic $\vartheta_m:\HH_g\times\CC^g\to\CC$ is defined by the series
\begin{equation*}
\vartheta_m(\tau,z)=
\sum_{n\in\ZZ^g}\mathtt{e}\left(\frac{1}{2} {}^t(n+m'/2)\tau(n+m'/2)+{}^t(n+m'/2)(z+m''/2)\right),
\end{equation*}
where $\mathtt{e}(\cdot)=e^{2\pi i(\cdot)}.$ It is a classical result that such a series defines an holomorphic function on $\HH_g\times\CC^g$ (cf.~\cite{rafabook}).
The theta function is an even (odd) function of $z$ if ${}^tm'm''\equiv0\pmod{2}$ (${}^tm'm''\equiv1\pmod{2}$), and so the parity of a theta characteristic is defined as
\[e(m)=(-1)^{{}^tm'm''},\]
and a theta characteristic $\ch {m'}{m''}$ is called even if $e(m)=1$ and odd if $e(m)=-1$.  There are $2^{g-1}(2^g+1)$ even characteristics and $2^{g-1}(2^g-1)$ odd ones. Evaluating the theta function at $z=0$ we get a holomorphic function on the Siegel upper-half space, not identically zero if and only if the related theta characteristic is even. These functions are usually called theta constants and are denoted by
\[\tc{m'}{m''}(\tau)=\tc{m'}{m''}(\tau,0).\]
When convenient, we will denote by $\vartheta_m$ the theta constant with characteristic $m=\ch {m'}{m''}$.

The integral symplectic group acts not only on the Siegel upper-half space by~\eqref{symplectic action} but also on the set of theta characteristics. We will see that via these actions the integral symplectic group acts on theta constants. For $\gamma\in\Gamma_g$ the action on theta characteristics is defined as follows. Let 
\begin{equation}\label{action characteristics}
\gamma\cdot\begin{bmatrix}
m'\\ m''\end{bmatrix}=\left[
\begin{pmatrix}
D&-C\\-B&A\end{pmatrix}\begin{pmatrix}
m'\\m''\end{pmatrix}+
\begin{pmatrix}
\diag(C^tD)\\\diag(A^tB)\end{pmatrix}
\right]\pmod{2}.
\end{equation}
The action defined in this way is neither linear nor transitive. Indeed, the action preserves the parity of the characteristics.

An important property of theta constants is the  classical transformation formula (cf~\cite{ig1}), namely theta constants with characteristics satisfy the identity
\begin{equation}\label{trans_theta}
\vartheta_{\gamma\cdot m}(\gamma\cdot\tau)=\kappa(\gamma)\det(C\tau+D)^{1/2}
\texttt{e}(\varphi_m(\gamma))\vartheta_m(\tau),
\end{equation}
where $\kappa(\gamma)$ is a primitive 8\textsuperscript{th} root of unity depending on $\gamma$
and
\begin{equation}\label{phi_m}
\varphi_m(\gamma)=-\frac{1}{8}({}^tm'{}^tBDm'+{}^tm''{}^tACm''-2\,{}^tm'{}^tBCm'')+\frac{1}{4}\,{}^t\op{diag}(A{}^tB)(Dm'-Cm'').
\end{equation}
An explicit expression is known for suitable powers of $\kappa$. For example, 
\begin{align}\label{kappa4}
\kappa(\gamma)^4&=(-1)^{\op{Tr}({}^tBC)},\quad\;\;\, \forall\gamma\in\Gamma_g,\\\label{kappa2}
\kappa(\gamma)^2&=(-1)^{\tr\left(\frac{A-I_g}{2}\right)}, \quad \forall\gamma\in\Gamma_g(2).
\end{align}

\noindent The transformation rule for theta constants under the action of an element $\gamma\in\Gamma_g$ can be easily derived from~\eqref{trans_theta}. That is
\[\vartheta_m(\gamma\cdot\tau)=\kappa(\gamma)\det(C\tau+D)^{1/2}\texttt{e}(\varphi_{\,\gamma^{-1}\cdot m}(\gamma))\vartheta_{\,\gamma^{-1}\cdot m}(\tau).\]
If $\gamma\in\Gamma_g(4,8)$ it is easy to see that $\gamma^{-1}\cdot m=m$ and $\texttt{e}(\varphi_{\,\gamma^{-1}\cdot m}(\gamma))=1$. Thus, theta constants with characteristic are modular forms of weight $1/2$ with respect to $\Gamma_g(4,8)$ and the multiplier system $v_\vartheta(\gamma):=\kappa(\gamma)$.

The modular forms we will be mainly interested in are the second order theta constants. For any $a\in\FF_2^g$, define
\[\Theta[a](\tau)=\tc {a}0(2\tau).\]
By the above formula we can immediately deduce the transformation rule for these functions under the action of an element $\gamma\in\Gamma_g$. For any $\gamma\in\Gamma_g$, let $\tilde{\gamma}\in\Gamma_g$ be the matrix such that $2(\gamma\cdot\tau)=\tilde{\gamma}\cdot2\tau$, namely $\tilde{\gamma}=\sm{A&2B}{C/2&D}$. Note that $\tilde{\gamma}\in\Gamma_g$ if and only if $C\equiv0\pmod{2}$. Therefore we get the following transformation rule:
\[\Theta[a](\gamma\cdot\tau)=\kappa(\tilde{\gamma})\det(C\tau+D)^{1/2}\texttt{e}(\varphi_{\,\tilde{\gamma}^{-1}\cdot \ch a0}(\tilde{\gamma}))\Theta[a'](\tau), \quad \gamma\in\Gamma_{g,0}(2),\]
where $a'=(\tilde{\gamma}^{-1}\cdot \ch a0)'$ and $\Gamma_{g,0}(2)=\{\gamma\in\Gamma_g\,|\,C\equiv0\,\text{(mod\,} 2 \text{)}\}$.
Then it is easily seen that second order theta constants are modular forms of weight $1/2$ with respect to the group $\Gamma_g(2,4)$ and the multiplier system $v_\Theta(\gamma)=\kappa(\tilde{\gamma})$.

Riemann's addition formula relates theta constants and second order theta constants(cf. \cite[Appendix II to Chapter II]{rafabook}).
\begin{prop}[Riemann's addition formula]
Given two $g$-characteristics $\ch{\e}{\e'}$, $\ch{\de}{\de'}$, for any $z,\,w\in\CC^g$ and $\tau\in\HH_g$ we have
\[\vartheta\bigl[\begin{smallmatrix}\varepsilon \\\varepsilon'\end{smallmatrix}\bigr]
\left(\begin{smallmatrix}\tau, &\frac{z+w}{2}\end{smallmatrix}\right)
\vartheta\bigl[\begin{smallmatrix}\delta \\\delta'\end{smallmatrix}\bigr]
\left(\begin{smallmatrix}\tau, &\frac{z-w}{2} \end{smallmatrix}\right)
=
\sum_{\sigma\in\mathbb{F}_2^g}
\vartheta\Bigl[\begin{smallmatrix}\frac{\varepsilon+\delta}{2}-\sigma\\
\varepsilon'+\delta'\end{smallmatrix}\Bigr]
(2\tau,z)
\vartheta\Bigl[\begin{smallmatrix}
\frac{\varepsilon-\delta}{2}+\sigma\\\varepsilon'-\delta'
\end{smallmatrix}\Bigr]
(2\tau,w).\]
\end{prop}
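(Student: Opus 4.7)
The plan is to expand both sides as exponential series using the definition of $\vartheta_m(\tau,z)$ given in section~\ref{sec: theta con}, and then show that after an appropriate change of summation variable, the two series match term by term. Since theta series converge absolutely on $\HH_g\times\CC^g$, the rearrangement is harmless.

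On the left-hand side, I write $N_1 = n_1 + \e/2$ and $N_2 = n_2 + \de/2$ for the running indices of the two theta series, so $N_1$ runs through $\ZZ^g+\e/2$ and $N_2$ through $\ZZ^g+\de/2$. The combined exponent in the double sum, after grouping the linear parts in $z$ and $w$, is
\[E = \tfrac{1}{2}\,{}^tN_1\tau N_1 + \tfrac{1}{2}\,{}^tN_2\tau N_2 + \tfrac{1}{2}\,{}^tN_1(z+w+\e') + \tfrac{1}{2}\,{}^tN_2(z-w+\de').\]
Setting $P = N_1+N_2$ and $Q = N_1-N_2$ and using the symmetry of $\tau$, a short manipulation rewrites this as
\[E = \tfrac{1}{4}\,{}^tP\tau P + \tfrac{1}{2}\,{}^tP\bigl(z+\tfrac{\e'+\de'}{2}\bigr) + \tfrac{1}{4}\,{}^tQ\tau Q + \tfrac{1}{2}\,{}^tQ\bigl(w+\tfrac{\e'-\de'}{2}\bigr).\]
The map $(N_1,N_2)\mapsto(P,Q)$ is a bijection from $(\ZZ^g+\e/2)\times(\ZZ^g+\de/2)$ onto the set of pairs in $(\ZZ^g+(\e+\de)/2)\times(\ZZ^g+(\e-\de)/2)$ satisfying the parity condition $P+Q\equiv\e\pmod{2\ZZ^g}$.

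On the right-hand side, I expand each theta factor via its series with parameter $2\tau$, which conveniently absorbs the factor $1/2$ in the quadratic part. Writing $a = n + \tfrac{1}{2}\bigl((\e+\de)/2-\sigma\bigr)$ and $b = m + \tfrac{1}{2}\bigl((\e-\de)/2+\sigma\bigr)$ for the shifted indices of the two series and setting $P := 2a$, $Q := 2b$, a direct calculation reproduces exactly the expression for $E$ displayed above. The combinatorial key is that, as $(n,m,\sigma)$ ranges over $\ZZ^g\times\ZZ^g\times\FF_2^g$, the pair $(P,Q)$ ranges bijectively over the same set
\[\bigl\{(P,Q)\in(\ZZ^g+(\e+\de)/2)\times(\ZZ^g+(\e-\de)/2)\;\big|\;P+Q\equiv\e\pmod{2\ZZ^g}\bigr\}\]
that indexes the left-hand side. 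Indeed, $P$ determines $\sigma$ uniquely modulo $2\ZZ^g$ via $\sigma\equiv(\e+\de)/2-P\pmod{2\ZZ^g}$, and the parity constraint is precisely what forces $m=(Q-(\e-\de)/2-\sigma)/2$ to lie in $\ZZ^g$.

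Once the two sides are recognized as sums over the same index set of identical exponentials, the identity is immediate. The only genuinely nontrivial step is the combinatorial bookkeeping just described: that summing over $\sigma\in\FF_2^g$ on the right converts a family of sums over single cosets of $2\ZZ^g$ into the full sum over the relevant half-integer lattice subject to the one parity relation $P+Q\equiv\e\pmod{2\ZZ^g}$. Modulo this bijection, everything else is an elementary identity for the quadratic form associated to $\tau$ under the substitution $(N_1,N_2)\leftrightarrow(P,Q)=(N_1+N_2,N_1-N_2)$.
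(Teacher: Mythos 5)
Your proof is correct: the parallelogram identity for the quadratic form, the change of variables $(N_1,N_2)\mapsto(P,Q)=(N_1+N_2,N_1-N_2)$, and the bookkeeping showing that the sum over $\sigma\in\FF_2^g$ exactly sweeps out the coset pairs subject to the single parity constraint $P+Q\equiv\e\pmod{2\ZZ^g}$ are all accurate, and absolute convergence justifies the rearrangement. The paper gives no proof of this proposition, citing only Farkas--Rauch; your argument is the standard direct series computation found there, so nothing further is needed.
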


\noindent By the above proposition the following well known identities can be derived:
\begin{align}\label{riemann relation}
\Theta[\sigma](\tau)\Theta[\sigma+\varepsilon](\tau)&=\frac{1}{2^g}\sum_{\varepsilon'\in\mathbb{F}_2^g}
(-1)^{\sigma^t\varepsilon'}\vartheta^2\bigl[\begin{smallmatrix}\varepsilon \\\varepsilon'\end{smallmatrix}\bigr](\tau),\\[6pt]
\label{riemann relation m1}
\vartheta^2\bigl[\begin{smallmatrix}\varepsilon \\\varepsilon'\end{smallmatrix}\bigr](\tau)
&=
\sum_{\sigma\in\mathbb{F}_2^g}(-1)^{\sigma^t\varepsilon'}
\Theta[\sigma](\tau)\Theta[\sigma+\varepsilon](\tau).
\end{align}
A first consequence of these relations is that $\kappa(\tilde{\gamma})^2=\kappa(\gamma)^2$. Hence the square of $v_\Theta$ is a non-trivial multiplier system 
on $\Gamma_g(2,4)$.
Moreover, it also follows that the vector space of modular forms spanned  by $\{\Theta[a]^2\}_{a \in \FF_2^g}$ coincides with the one spanned  by $\{\tc 0b^2\}_{b\in\FF_2^g}$. From now on we set the notation $\vartheta_b:=\tc 0b$.

\subsection{Differential forms and vector-valued modular forms}\label{diff_form}

Let $(\rho,V_\rho)$ be an irreducible rational representation of $\op{GL}(g,\CC)$ on a finite dimensional complex vector space $V_\rho$. Let $\Gamma$ be a congruence subgroup of $\Gamma_g$. A holomorphic function $f:\HH_g\to V_\rho$ is called a vector valued modular form with respect to $\Gamma$ and the representation $\rho$ if
\[f_{\mid_{\gamma,\rho}}(\tau):=\rho(C\tau+D)^{-1}f(\gamma\cdot\tau)=f(\tau),\;\forall\gamma\in\Gamma.\]
The space of such functions will be denoted by $[\Gamma,\rho]$. 
The representation $\rho$ is uniquely identified by its highest weight $(\lambda_1,\dots,\lambda_g)\in\ZZ^g$ with $\lambda_1\ge\cdots\ge\lambda_g$. The weight of the vector valued modular form is defined to be equal \mbox{to $\lambda_g$.} Siegel modular forms with trivial multiplier of section~\ref{sec: modular forms} are vector valued modular forms with respect to the representation $\det^k$, for a suitable $k$.

For any complex manifold $X$ denote by $\Omega^n(X)$ the sections of the sheaf of holomorphic differential forms on $X$ of degree $n$. For a congruence subgroup $\Gamma$ denote by $X_\Gamma^0$ the set of regular points of $\overline{\HH_g/\Gamma}$ and by $\widetilde{X_\Gamma}$ a desingularization of $\overline{\HH_g/\Gamma}$. If $N$ is the dimension of $\HH_g$, by~\cite{FP} every holomorphic differential form $\omega\in\Omega^n(X_\Gamma^0)$ of degree $n<N$ extends holomorphically to $\widetilde{X_\Gamma}$. Moreover, if $g\ge2$ and $n<N$ there is a natural isomorphism 
\[\Omega^n(X_\Gamma^0)\cong\Omega^n(\HH_g)^\Gamma,\]
where $\Omega^n(\HH_g)^\Gamma$ is the space of $\Gamma$-invariant holomorphic differential forms on $\HH_g$ of degree $n$.
For suitable degrees, depending only on $g$, some of these spaces are known to be trivial. The non-trivial $\Gamma$-invariant holomorphic differential forms have a precise description in terms of vector-valued modular forms (cf~\cite{weissauer}).   

In this paper we will be mainly interested in holomorphic differential forms of degree $N-1$. 
On $\HH_g$, define 
\[d\check{\tau}_{ij}=\pm\; e_{ij}\bigwedge_{\substack{1\le k\le l\le g\\[2pt](k,l)\neq(i,j)}}d\tau_{kl};\;\; e_{ij}=\frac{1+\de_{ij}}{2},\]
where the sign is so chosen that  $d\check{\tau}_{ij}\wedge d\tau_{ij}=e_{ij}\bigwedge_{1\le k\le l\le g}d\tau_{kl}$. Then any $\omega\in\Omega^{N-1}(\HH_g)^\Gamma$ can be written in the form 
\begin{equation}\label{omega}
\omega=\op{Tr}(f(\tau)d\check{\tau})=\sum_{1\le i,j\le g} f_{ij}(\tau)d\check{\tau}_{ij},
\end{equation}
where
\begin{equation*}
f(\gamma\cdot\tau)=\op{det}(C\tau+D)^{g+1}\,{}^t(C\tau+D)^{-1}f(\tau)(C\tau+D)^{-1}.
\end{equation*}
Equivalently, $f$ is a vector valued modular form with respect to $\Gamma$ and the irreducible representation $\rho_1$ with highest weight $(g+1,\dots,g+1,g-1)$.

\section{Isomorphic Siegel modular threefolds with a degree 8 endomorphism}\label{section2}

In this section we will focus our attention on the case $g=2$. We will show that many modular threefolds share with the Igusa quartic the property of having a degree 8 endomorphism. To be precise, this is the result of Corollary~\ref{gen1} and Corollary~\ref{gen2}. For this purpose we will construct isomorphism of graded rings of Siegel modular forms giving isomorphisms of the related modular varieties.

We will be mostly interested in the four second order theta constants in genus 2.
\[f_{00}:=\Theta[0\,0],\;f_{01}:=\Theta[0\,1],\;f_{10}:=\Theta[1\,0],\;f_{11}:=\Theta[1\,1].\]
It is shown in~\cite{runge1} that
\begin{equation}\label{ring Gamma(2,4)}
A(\Gamma_2(2,4),\,v_\Theta)=\CC[f_{00},f_{01},f_{10},f_{11}].
\end{equation}
Hence the modular threefold associated to this ring is  isomorphic to $\PP^3$.
 
\subsection{Degree 8 map between two different modular threefolds}

Any symmetric $2\times 2$ integer matrix $S$ determines an element $\gamma_S\in\Gamma_2$, namely
\[\gamma_S=\begin{pmatrix}1_2&S\\0&1_2\end{pmatrix}.\]
In particular, if we put
\[B_1=\sm{2&0}{0&0},\;B_2=\sm{0&0}{0&2},\;B_3=\sm{0&1}{1&0},\]
then the matrices $M_i:=\gamma_{B_i}$ belong to $\Gamma_2^2(2,4)$, and the $M_i^2$ belong to its index 8 normal subgroup $\Gamma_2(2,4)$. By taking $\{M_1,M_2,M_3\}$ as a basis we thus identify $\Gamma_2^2(2,4)/\Gamma_2(2,4)$ with $\FF_2^3$.

We will discuss the action of this group on the second order theta constants $f_a$ and the functions $\vartheta_b$ with $a,\,b\in\FF_2^2$, in order to find generators for the ring of modular forms with respect to $\Gamma_2^2(2,4)$ and a suitable multiplier system. First focus on the action of the matrices $M_i$ on theta constants. From~\cite[p. 59]{runge1} we have
\[\tc {m'}{m''}(\gamma_S\cdot\tau)=\tc {m'}{m''}(\tau+S)=\varepsilon^{-{}^tm'(Sm'+2\diag(S))}\tc {m'}{m''+Sm'+\diag(S)}(\tau),\]
with $\varepsilon=\frac{1+i}{\sqrt{2}}$ a primitive 8\textsuperscript{th} root of unity. For the second order theta constants this gives
\[\Theta[a](\gamma_S\cdot\tau)=i^{{}^taSa}\,\Theta[a](\tau).\]
Thus, for $a=(a_1,a_2)\in\FF_2^2$ it follows that
\begin{align*}
f_a(M_1\cdot\tau)&=(-1)^{a_1}f_a(\tau),\\
f_a(M_2\cdot\tau)&=(-1)^{a_2}f_a(\tau),\\
f_a(M_3\cdot\tau)&=(-1)^{a_1a_2}f_a(\tau).
%
\end{align*}
So the group $\Gamma_2^2(2,4)/\Gamma_2(2,4)$ acts by changes of sign on $f_a$ if $a\neq 00$. Therefore it acts trivially on the $f_a^2$ and also on the $\vartheta_b^2$. 
\begin{prop} 
The ring $\CC[f_{00}^2,\dots,f_{11}^2]$ is equal to the subring $A_\NN(\Gamma_2^2(2,4),\,v_{\Theta}^2)\subset A(\Gamma_2^2(2,4),\,v_{\Theta}^2)$ of modular forms with integer weight. 
\end{prop}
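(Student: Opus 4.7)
The plan is to establish both inclusions of the claimed equality. The inclusion $\CC[f_{00}^2,\ldots,f_{11}^2]\subseteq A_\NN(\Gamma_2^2(2,4),v_\Theta^2)$ is essentially contained in the transformation calculations already performed: each $f_a^2$ has integer weight $1$ and lies in $A(\Gamma_2(2,4),v_\Theta^2)$, and the formulas $f_a(M_i\cdot\tau)=\pm f_a(\tau)$ displayed above show that $f_a^2$ is $M_i$-invariant for $i=1,2,3$. Since $\Gamma_2(2,4)$ together with $M_1,M_2,M_3$ generates $\Gamma_2^2(2,4)$, this yields invariance under the larger group -- provided the multiplier $v_\Theta^2$ is trivial on each $M_i$.

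This last point is the one short preliminary I would verify. Applying~\eqref{kappa2} to $\tilde M_i=\sm{1_2 & 2B_i}{0 & 1_2}\in\Gamma_2(2)$, whose top-left block is $1_2$, gives $v_\Theta^2(M_i)=\kappa(\tilde M_i)^2=(-1)^{\tr((1_2-1_2)/2)}=1$. This computation is the only technical wrinkle; once it is in place, the rest of the argument is formal character theory on monomials.

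For the reverse inclusion, take $F\in A_\NN(\Gamma_2^2(2,4),v_\Theta^2)$ of integer weight $k$. Since $\Gamma_2(2,4)\subset\Gamma_2^2(2,4)$, the form $F$ lies in $A(\Gamma_2(2,4),v_\Theta)$ at weight $k$, so by~\eqref{ring Gamma(2,4)} it is a homogeneous polynomial of degree $2k$ in the $f_a$. Writing $F=\sum_\alpha c_\alpha f^\alpha$ with $\alpha=(\alpha_{00},\alpha_{01},\alpha_{10},\alpha_{11})$ and $|\alpha|=2k$, the displayed transformation rules give $f^\alpha(M_1\cdot\tau)=(-1)^{\alpha_{10}+\alpha_{11}}f^\alpha(\tau)$, together with the analogous formulas for $M_2$ and $M_3$. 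Combined with $v_\Theta^2(M_i)=1$, the $\Gamma_2^2(2,4)$-invariance of $F$ then forces, for every $\alpha$ with $c_\alpha\neq0$, each of $\alpha_{10}+\alpha_{11}$, $\alpha_{01}+\alpha_{11}$ and $\alpha_{11}$ to be even. Hence $\alpha_{11},\alpha_{10},\alpha_{01}$ are all even, and $\alpha_{00}=2k-\alpha_{01}-\alpha_{10}-\alpha_{11}$ is then even as well, so $f^\alpha\in\CC[f_{00}^2,\ldots,f_{11}^2]$, completing the argument.
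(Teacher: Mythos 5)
Your proof is correct, and for the harder (reverse) inclusion it takes a genuinely different route from the paper. The paper disposes of that inclusion abstractly: both rings are integrally closed, and both have the same field of fractions (the index-$8$ subfield of $\CC(f_a)$ cut out by the sign characters), so they coincide. You instead argue concretely: an integer-weight element of $A_\NN(\Gamma_2^2(2,4),v_\Theta^2)$ restricts to a form for $\Gamma_2(2,4)$, hence by Runge's result~\eqref{ring Gamma(2,4)} is a homogeneous polynomial in the algebraically independent $f_a$, and the characters of $\Gamma_2^2(2,4)/\Gamma_2(2,4)\cong\FF_2^3$ on monomials force every exponent to be even. Your version is more elementary and self-contained -- it does not invoke normality of rings of modular forms, and it exhibits exactly which monomials survive -- at the cost of being specific to the polynomial-ring presentation of $A(\Gamma_2(2,4),v_\Theta)$; the paper's argument is shorter and would survive in settings where no such explicit presentation is available. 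You also make explicit a point the paper leaves tacit, namely that $v_\Theta^2(M_i)=\kappa(\tilde M_i)^2=1$ via~\eqref{kappa2}, which is indeed needed for both inclusions. (Two cosmetic quibbles: $f_a^2$ lives in the weight-one graded piece $[\Gamma_2(2,4),1,v_\Theta^2]$ of $A(\Gamma_2(2,4),v_\Theta)$ rather than in a ring ``$A(\Gamma_2(2,4),v_\Theta^2)$''; and the final parity step tacitly uses the linear independence of distinct monomials in the $f_a$, which is exactly what Runge's theorem guarantees.)
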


\begin{proof}
We have just seen that $\CC[f_a^2]\subset A_\NN(\Gamma_2^2(2,4),\,v_{\Theta}^2)$. For the opposite inclusion, since both rings are integrally closed it is enough to show that they have the same field of fractions. This is also immediate, because we have already seen that they both have $\CC(f_a)$ as an extension of degree 8.
 \end{proof}

Thus the degree 8 endomorphism of $\PP^3$ given by 
$[x_0,\dots,x_3]\mapsto[x_0^2,\dots,x_3^2]$
can be seen as a map between the two  modular varieties
$$\psi :\op{Proj}( A(\Gamma_2(2,4))) \to  \op{Proj}(A(\Gamma_2^2(2,4))).$$
Here we omit the multipliers since the modular variety is independent of the choice of the multiplier system.

\subsection{Isomorphisms of modular threefolds, the main result}
It is easily checked that the groups $\Gamma_2^2(2,4)$ and $\Gamma_2(2,4)$ are normal subgroups of $\Gamma_0(2)$ and $\Gamma_0^0(2)$ respectively (see~\eqref{gamma0},~\eqref{gamma00},~\eqref{def symplectic}). 
Moreover $[\Gamma_0(2)\colon\Gamma_2^2(2,4)]=96$ and the quotient group is isomorphic to the semidirect product $\FF_2^4\ltimes S_3$ where $S_3$ is the symmetric group of order 3.

We can construct an isomorphism, 
\[\varphi\colon\Gamma_0(2)/\Gamma_2^2(2,4)\to\Gamma_0^0(2)/\Gamma_2(2,4),\]
as follows. For a class $\gamma\in\Gamma_0(2)/\Gamma_2^2(2,4)$ we can choose a representative (which we also call $\gamma$) of the form
\begin{equation*}
\gamma\equiv\sm{A&B}{C&D}=\sm{1&0}{CA^{-1}&1}\sm{A&0}{0&^tA^{-1}}\sm{1&A^{-1}B}{0&1}.
\end{equation*}
Define \[\varphi(\gamma)=\sm{1&0}{CA^{-1}&1}\sm{A&0}{0&^tA^{-1}}\sm{1&A^{-1}2B}{0&1}.\]
Roughly speaking, the map $\varphi$ sends ``B'' to ``2B''. Set \[G:=\Gamma_0(2)/\Gamma_2^2(2,4)\cong\Gamma_0^0(2)/\Gamma_2(2,4).\]
If $S$ is a symmetric $2\times 2$ matrix with integer coefficients, then from~\cite[section 2]{fsm} we know that $\Gamma_0(2)$ is generated by matrices of the form ${}^t\gamma_{2S}$, $\gamma'=\sm{A&0}{0&^tA^{-1}}$ and $\gamma_S$. The classes of these matrices are then generators for the group $\Gamma_0(2)/\Gamma_2^2(2,4)$ and their images under $\varphi$ are generators for the group $\Gamma_0^0(2)/\Gamma_2(2,4)$.

Omitting the weight and the multiplier in the notation of~\eqref{action2}, by an easy computation it follows that
\[
\begin{array}{ll}
{f_a}^2_{\mid_{{}^t\gamma_{2S}}}=f_{a-\diag(S)}^2,&
{f_a}_{\mid_{{}^t\gamma_{2S}}}=f_{a-\diag(S)}.\\
\\
{f_a}^2_{\mid_{\gamma'}}=f_{Aa}^2,&{f_a}_{\mid_{\gamma'}}=f_{Aa},\\
\\
{f_a}^2_{\mid_{\gamma_S}}=i^{{}^ta2Sa}f_a^2
,&{f_a}_{\mid_{\gamma_{2S}}}=i^{{}^ta2Sa}f_a.
\end{array}
\]
This shows that via the isomorphism $\varphi$ the action of the group $G$ on the two polynomial rings is the same and the map $f_a\mapsto f_a^2$ is an isomorphism of $G$-modules. 
\begin{thm}\label{prop gamma02}  For any subgroup $H\subset G$ there exist two groups $\Gamma,\,\Gamma'$ such that 
\[
\Gamma_2(2,4)\subset\Gamma\subset\Gamma_0^0(2),\;\;
\Gamma_2^2(2,4)\subset\Gamma'\subset\Gamma_0(2)
\]
and the quotients $\Gamma/\Gamma_2(2,4) $ and $\Gamma'/\Gamma_2^2(2,4)$ are both isomorphic to $H$ via the map induced by $\varphi$. It is also induced an isomorphism 
\[\Phi_{H}:A(\Gamma,v_\Theta)\to A_\NN(\Gamma',v_\Theta^2),\]
which doubles the weights. If $f\in[\Gamma,k/2,v_\Theta]$ then $\Phi_H(f)\in[\Gamma',k,v_\Theta^2]$.
\end{thm}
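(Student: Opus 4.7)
The plan is to package the preceding computations into the statement by constructing $\Gamma$ and $\Gamma'$ as explicit preimages and then taking $H$-invariants of the already established $G$-module isomorphism $f_a\mapsto f_a^2$.

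First, given $H\subset G$, I would set $\Gamma$ to be the preimage of $\varphi(H)$ under the quotient $\Gamma_0^0(2)\to\Gamma_0^0(2)/\Gamma_2(2,4)\cong G$, and $\Gamma'$ to be the preimage of $H$ under $\Gamma_0(2)\to\Gamma_0(2)/\Gamma_2^2(2,4)=G$. Since $\Gamma_2(2,4)$ is normal in $\Gamma_0^0(2)$ and $\Gamma_2^2(2,4)$ is normal in $\Gamma_0(2)$, these preimages are honest subgroups, and by construction $\Gamma_2(2,4)\subset\Gamma\subset\Gamma_0^0(2)$, $\Gamma_2^2(2,4)\subset\Gamma'\subset\Gamma_0(2)$, with quotients canonically identified with $H$ via $\varphi$.

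Next I would identify the two rings in the statement with rings of $H$-invariants. The slash action of $\Gamma_0^0(2)$ on $A(\Gamma_2(2,4),v_\Theta)=\CC[f_{00},\ldots,f_{11}]$ is trivial on $\Gamma_2(2,4)$ by definition of modularity, and it preserves the grading by weight, so it descends to a graded $G$-action. A polynomial in the $f_a$ is then $\Gamma$-invariant if and only if it is $H$-invariant, yielding $A(\Gamma,v_\Theta)=\CC[f_a]^H$. The same reasoning applied to the $v_\Theta^2$-slash action of $\Gamma_0(2)$ on $A_\NN(\Gamma_2^2(2,4),v_\Theta^2)=\CC[f_{00}^2,\ldots,f_{11}^2]$ (using the proposition just proved) gives $A_\NN(\Gamma',v_\Theta^2)=\CC[f_a^2]^H$.

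To produce $\Phi_H$, I would extend $f_a\mapsto f_a^2$ to the unique $\CC$-algebra isomorphism between the two polynomial rings in four variables. The preceding slash-action computations (showing that the generators ${}^t\gamma_{2S},\gamma',\gamma_S$ of $\Gamma_0(2)/\Gamma_2^2(2,4)$ act on the $f_a^2$ in exactly the same way that their $\varphi$-images act on the $f_a$) say precisely that this abstract isomorphism is $G$-equivariant. Restricting to $H$-fixed points then yields $\Phi_H:A(\Gamma,v_\Theta)\to A_\NN(\Gamma',v_\Theta^2)$. The weight-doubling property is immediate: a form of weight $k/2$ in $A(\Gamma,v_\Theta)$ is a homogeneous polynomial of degree $k$ in the $f_a$, which maps to a homogeneous polynomial of degree $k$ in the $f_a^2$, hence to a form of weight $k$ with multiplier $v_\Theta^{2k}$.

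The main obstacle I anticipate is not the packaging itself but the bookkeeping implicit in step two: verifying that $v_\Theta$, defined through $\kappa(\tilde\gamma)$, really does extend to a well-defined multiplier system on $\Gamma_0^0(2)$ (so that the slash action and hence the notion of $\Gamma$-modularity make sense on all intermediate groups). This rests on the fact that $C\equiv 0\pmod 2$ for every element of $\Gamma_0^0(2)\subset\Gamma_{2,0}(2)$, so $\tilde\gamma$ is always integral symplectic. Once this is noted, the theorem is a formal consequence of the $G$-equivariance of $f_a\mapsto f_a^2$ established above.
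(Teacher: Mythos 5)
Your proposal is correct and follows essentially the same route as the paper, which states the theorem as an immediate consequence of the displayed slash-action computations showing that $f_a\mapsto f_a^2$ is a $G$-module isomorphism: you take $\Gamma$, $\Gamma'$ to be the preimages of $\varphi(H)$ and $H$ in $\Gamma_0^0(2)$ and $\Gamma_0(2)$, identify $A(\Gamma,v_\Theta)$ and $A_\NN(\Gamma',v_\Theta^2)$ with the $H$-invariants of $\CC[f_a]$ and $\CC[f_a^2]$, and restrict the equivariant isomorphism. Your closing remark that $v_\Theta(\gamma)=\kappa(\tilde\gamma)$ makes sense on all of $\Gamma_0^0(2)$ because $C\equiv0\pmod 2$ there is exactly the point the paper relies on implicitly, so the argument matches.
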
 
\noindent As an immediate consequence we have the following corollary:
\begin{cor}\label{gen1}
For every subgroup $\Gamma$ such that $\Gamma_2(2,4)\subset\Gamma\subset\Gamma_0^0(2)$ the projective variety $\op{Proj}(A(\Gamma))$ has a map of degree 8 onto itself.
\end{cor}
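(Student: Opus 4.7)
The plan is to combine the isomorphism of graded rings from Theorem~\ref{prop gamma02} with the degree~$8$ squaring map $\psi$ of the preceding subsection.

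First, set $H:=\Gamma/\Gamma_2(2,4)\subset G$; this is well defined since $\Gamma_2(2,4)$ is normal in $\Gamma_0^0(2)$. Applying Theorem~\ref{prop gamma02} produces a subgroup $\Gamma'$ with $\Gamma_2^2(2,4)\subset\Gamma'\subset\Gamma_0(2)$ together with an isomorphism $\Phi_H\colon A(\Gamma,v_\Theta)\xrightarrow{\;\sim\;} A_\NN(\Gamma',v_\Theta^2)$. Since the Satake compactification is independent of the multiplier and insensitive to restriction to an even-weight subring, passing to $\op{Proj}$ yields an isomorphism of modular varieties $\Psi\colon\op{Proj}(A(\Gamma))\xrightarrow{\;\sim\;}\op{Proj}(A(\Gamma'))$.

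Next, the inclusion $\CC[f_a^2]\hookrightarrow\CC[f_a]$ underlying $\psi$ is $G$-equivariant once the two actions are identified via $\varphi$, as was shown in the preceding subsection. Restricting to $H$-invariants therefore produces an inclusion $A_\NN(\Gamma',v_\Theta^2)=\CC[f_a^2]^H\hookrightarrow\CC[f_a]^H=A(\Gamma,v_\Theta)$ that geometrically is a finite surjection $\bar\psi\colon\op{Proj}(A(\Gamma))\to\op{Proj}(A(\Gamma'))$. The composition $\Psi^{-1}\circ\bar\psi$ then furnishes the desired self-map of $\op{Proj}(A(\Gamma))$.

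It remains to confirm that $\bar\psi$ has degree $8$. Writing $K$ and $L$ for the function fields of the source and target of $\psi$, one has $[K:L]=8$ (the degree of the squaring map on $\PP^3$), and the equivariance of the two $H$-actions yields $|H|\cdot[K^H:L^H]=[K:L^H]=[K:L]\cdot[L:L^H]=8\cdot|H|$, so $[K^H:L^H]=8$. This degree bookkeeping, together with the $G$-equivariance of $\psi$ on the generators ${}^t\gamma_{2S},\gamma',\gamma_S$ already recorded, is the only nontrivial point beyond the invocation of the theorem.
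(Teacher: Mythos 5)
Your argument is correct and is essentially the paper's own (the paper states the corollary as an immediate consequence of Theorem~\ref{prop gamma02} combined with the modular interpretation of the squaring map $\psi$): identify $H$ with $\Gamma/\Gamma_2(2,4)$, use $\Phi_H$ to identify $\op{Proj}(A(\Gamma))$ with $\op{Proj}(A(\Gamma'))$, and compose with the descended degree~$8$ map. Your explicit function-field bookkeeping for the degree is a welcome elaboration of what the paper leaves implicit, but it is not a different route.
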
 

\subsubsection{Degree 8 endomorphism of the Igusa quartic}
$\HH_2/\Gamma_2(2)$ is the coarse moduli space of  principally polarized abelian surfaces with a level 2 structure. Igusa~\cite{igusa} proved that the Satake compactification $\overline{\HH_2/\Gamma_2(2)}$ is a quartic hypersurface in $\PP^4$ given by the equation
\[(x_0x_1+x_0x_2+x_1x_2-x_3^2)^2-4x_0x_1x_2(x_0+x_1+x_2+x_3+x_4)=0.\]

Since $\Gamma_2(2,4)\subset\Gamma_2(2)\subset\Gamma_0^0(2)$, one recovers the result in~\cite{mukai} that the Igusa quartic has a degree 8 endomorphism as a special case of Corollary~\ref{gen1}.

\subsubsection{An isomorphism between the Satake compactifications of two moduli spaces of abelian surfaces}
As another application of Theorem~\ref{prop gamma02} we will give a different proof of the isomorphism between the Igusa quartic and the Satake compactification of the moduli space of principally polarized abelian surfaces with a G\"opel structure studied in~\cite{mukai,cvg}. This moduli space is $\HH_2/\Gamma_1(2)$, where
\[\Gamma_1(2)=\Set{\gamma\in\Gamma_2\mid A\equiv D\equiv 1_2\pmod{2},\,C\equiv0\pmod{2}}.\]
It is readily seen that both $\Gamma_2(2)/\Gamma_2(2,4)$ and 
$\Gamma_1(2)/\Gamma_2^2(2,4)$ are isomorphic to the group $H$ generated by $M_1,\,M_2,\,{}^tM_1$ and ${}^tM_2$. Therefore the isomorphism $\varphi_H$ of Theorem~\ref{prop gamma02} induces an isomorphism between $\overline{\HH_2/\Gamma_2(2)}$ and $\overline{\HH_2/\Gamma_1(2)}$. 

\subsection{Action of the Fricke involution}

The Fricke involution on $\HH_g$ is the involution given by the matrix
\[J_2=\tfrac{1}{\sqrt{2}}\begin{pmatrix}
0&1_g\\
-2\,1_g&0
\end{pmatrix}\in\Sp(4,\RR).\]
The action of the real symplectic group on $\HH_g$ is defined as in~\eqref{symplectic action}, then for $\tau\in\HH_g$ we have that
\[J_2\cdot\tau=-\frac{1}{2\,\tau}.\]
We are interested in the case $g=2$ and the action of $J_2$ on the functions $f_a$ with  $a\in\FF_2^2$.
Although formula~\eqref{action characteristics} does not define an action of $\Sp(2g,\RR)$ on theta characteristics, it is still possible to use the classical transformation formula for theta functions to compute the action of the matrix $J_2$ on theta constants. 

An easy computation shows that
$f_a(J_2\cdot\tau)=
v_\Theta(J_2)\det(\tau)^{1/2}\vartheta_a(\tau),$ where we define  $v_\Theta(J_2)$ to be equal to $v_\vartheta(J)$ with $J$ as in~\eqref{def_sympl}. Therefore 
\begin{equation}\label{Fricke f_a}
{f_a}_{\mid_{J_2,1/2,v_\Theta}}=\vartheta_a.
\end{equation}
For any $\gamma\in\Sp(4,\RR)$ we write $\gamma^{J_2}$ for the conjugate $J_2\gamma J_2^{-1}$. Then
\begin{equation}\label{conj J2}
\gamma^{J_2}=\begin{pmatrix}D&-C/2\\-2B&A\end{pmatrix},\quad\forall\gamma\in\Gamma_2.
\end{equation}
In particular, if $\gamma\in\Gamma_2$, then $\gamma^{J_2}\in \Gamma_2$ if and only if $C\equiv0\text{\;(mod 2)}$.

From~\eqref{conj J2} we can compute that
\[\Gamma_2^2(2,4)^{J_2}=\Gamma_2^2(2,4)\;\;\text{and}\;\;\Gamma_0(2)^{J_2}=\Gamma_0(2),\] 
whereas
\[\Gamma_0^0(2)^{J_2}=\Gamma_0(4):=\Set{\gamma\in\Gamma_2\,\mid\,C\equiv0\text{ (mod 4)}},\]
and
\[\Gamma_2(2,4)^{J_2}=\left.
\begin{cases}&
A\equiv D\equiv 1_2\text{ (mod 2)},\\
\gamma\in\Gamma_2\;\;\text{s.t.}&C\equiv0\text{ (mod 4)},\;\diag(C)\equiv0\text{ (mod 8)},\\
&\diag(B)\equiv0\text{ (mod 2)}
\end{cases}
\right\}.\]
We can exploit this action to compute the ring of modular forms with respect to the group $\Gamma_2(2,4)^{J_2}$. 
From~\eqref{ring Gamma(2,4)} and~\eqref{Fricke f_a} it follows that
\[A(\Gamma_2(2,4)^{J_2},v_\vartheta)=\CC[\vartheta_b].\]
Moreover, since the $f_a^2$ are linear combination of the $\vartheta_b^2$ and vice-versa, by~\eqref{riemann relation} and~\eqref{riemann relation m1}, the polynomial ring $\CC[f_a^2]=\CC[\vartheta_b^2]$ is invariant under the action of the Fricke involution. 

Thus, we have another modular interpretation of the endomorphism~\eqref{square} of $\PP^3$. Set $G':=\Gamma_0(4)/\Gamma_2(2,4)^{J_2}$. With the same arguments that led us to Theorem~\ref{prop gamma02}, we have an isomorphism $\varphi': \Gamma_0(2)/\Gamma_2^2(2,4)\to\Gamma_0(4)/\Gamma_2(2,4)^{J_2}$ such that via this isomorphism the action of the group $G'$ on the rings $\CC[\vartheta_b]$ and $\CC[\vartheta_b^2]$ is the same and the map $\vartheta_b\mapsto\vartheta_b^2$ is an isomorphism of $G'$-modules.

\begin{thm}\label{prop gamma04}
For any subgroup $H'\subset G'$ there exist two groups $\Delta,\,\Delta'$ such that
\[\Gamma_2(2,4)^{J_2}\subset\Delta\subset\Gamma_0(4), \;\; \Gamma_2^2(2,4)\subset\Delta'\subset\Gamma_0(2),\]
and the quotients $\Delta/\Gamma_2(2,4)^{J_2}$ and $\Delta'/\Gamma_2^2(2,4)$ are both isomorphic to $H'$. This isomorphism is induced by $\varphi'$. Therefore it is also induced an isomorphism of graded ring of modular forms 
\[\Psi_{H'}:A(\Delta,v_\vartheta)\to A_\NN(\Delta',v_\vartheta^2)\]
such that if $f\in[\Delta,k/2,v_\vartheta]$ then $\Psi_{H'}(f)\in[\Delta',k,v_\vartheta^2]$. 
\end{thm}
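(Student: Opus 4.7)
The proof will run almost entirely in parallel with that of Theorem~\ref{prop gamma02}, with the group $\Gamma_0^0(2)$ (resp.\ $\Gamma_2(2,4)$, resp.\ $f_a$) replaced throughout by its Fricke conjugate $\Gamma_0(4)$ (resp.\ $\Gamma_2(2,4)^{J_2}$, resp.\ $\vartheta_b$). First I would make the construction of the groups explicit: given $H'\subset G'$, let $\pi\colon\Gamma_0(4)\to G'$ and $\pi'\colon\Gamma_0(2)\to\Gamma_0(2)/\Gamma_2^2(2,4)$ be the quotient maps, and set
\[
\Delta:=\pi^{-1}(H'),\qquad \Delta':=(\pi')^{-1}\bigl((\varphi')^{-1}(H')\bigr).
\]
By the normality of $\Gamma_2(2,4)^{J_2}$ in $\Gamma_0(4)$ and of $\Gamma_2^2(2,4)$ in $\Gamma_0(2)$ (already used in the paragraph preceding the statement), both $\Delta$ and $\Delta'$ are groups, and the induced maps $\Delta/\Gamma_2(2,4)^{J_2}\to H'$ and $\Delta'/\Gamma_2^2(2,4)\to H'$ are isomorphisms compatible with $\varphi'$.

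Next I would build $\Psi_{H'}$. Since $A(\Gamma_2(2,4)^{J_2},v_\vartheta)=\CC[\vartheta_b]$ and $A_\NN(\Gamma_2^2(2,4),v_\vartheta^2)$ contains $\CC[\vartheta_b^2]$ (this latter ring is the Fricke image, via the identity $\vartheta_b=f_{a\,\mid J_2}$ combined with Riemann's relations, of $\CC[f_a^2]$), any modular form in $A(\Delta,v_\vartheta)$ is a polynomial $P(\vartheta_b)$ in the $\vartheta_b$ that is invariant under the action of $\Delta$ with multiplier $v_\vartheta$. The action of $\Delta$ on $\CC[\vartheta_b]$ factors through $H'$, and via $\varphi'$ this is the same as the action of $H'$ on $\CC[\vartheta_b^2]$ coming from $\Delta'$ with multiplier $v_\vartheta^2$, by construction of $\varphi'$. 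Hence the $\CC$-linear map
\[
\Psi_{H'}\colon P(\vartheta_{00},\vartheta_{01},\vartheta_{10},\vartheta_{11})\;\longmapsto\;P(\vartheta_{00}^2,\vartheta_{01}^2,\vartheta_{10}^2,\vartheta_{11}^2)
\]
sends $H'$-invariants in $\CC[\vartheta_b]$ to $H'$-invariants in $\CC[\vartheta_b^2]$. This gives a graded ring homomorphism $A(\Delta,v_\vartheta)\to A_\NN(\Delta',v_\vartheta^2)$ doubling the weights, as $\vartheta_b$ has weight $1/2$ and $\vartheta_b^2$ has weight $1$.

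It remains to verify that $\Psi_{H'}$ is surjective and injective. Injectivity is obvious since $\Psi_{H'}$ is a substitution in algebraically independent generators. For surjectivity, every element of $A_\NN(\Delta',v_\vartheta^2)$ lies a fortiori in $A_\NN(\Gamma_2^2(2,4),v_\vartheta^2)=\CC[\vartheta_b^2]$ (using the analogue for $\vartheta_b^2$ of the proposition preceding Theorem~\ref{prop gamma02}, which follows from the same integral-closure argument together with $\CC(\vartheta_b)/\CC(\vartheta_b^2)$ being a degree-$8$ extension), hence is of the form $Q(\vartheta_b^2)$; the $\Delta'$-invariance of $Q(\vartheta_b^2)$ with multiplier $v_\vartheta^2$ translates, via $(\varphi')^{-1}$ and the $G'$-equivariance of $\vartheta_b\mapsto\vartheta_b^2$, into $\Delta$-invariance of $Q(\vartheta_b)$ with multiplier $v_\vartheta$, so $Q(\vartheta_b)\in A(\Delta,v_\vartheta)$ is a preimage.

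The only genuinely delicate point, and the main place where care is required, is ensuring that the multiplier systems $v_\vartheta$ on $\Delta$ and $v_\vartheta^2$ on $\Delta'$ match under $\varphi'$ on the generators of $G'$; once this has been checked on the generators of $\Gamma_0(4)/\Gamma_2(2,4)^{J_2}$ obtained by Fricke-conjugating those of $\Gamma_0(2)/\Gamma_2^2(2,4)$ exhibited in the previous subsection (translations, $\gamma'$-type matrices, and their transposes), the equivariance of $\vartheta_b\mapsto\vartheta_b^2$ follows immediately from the transformation rules already recorded for $f_a$ and from~\eqref{Fricke f_a}.
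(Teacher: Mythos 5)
Your proposal is correct and follows essentially the same route as the paper, which proves Theorem~\ref{prop gamma04} by transporting the argument for Theorem~\ref{prop gamma02} through the Fricke conjugation, using $A(\Gamma_2(2,4)^{J_2},v_\vartheta)=\CC[\vartheta_b]$, the $G'$-equivariance of $\vartheta_b\mapsto\vartheta_b^2$ via $\varphi'$, and the identification $\CC[f_a^2]=\CC[\vartheta_b^2]$ from Riemann's relations. Your write-up merely makes explicit (preimage construction of $\Delta,\Delta'$, injectivity/surjectivity of the substitution map) what the paper leaves as ``the same arguments that led us to Theorem~\ref{prop gamma02}''.
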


Note that since the groups $\Gamma_2^2(2,4)$ and $\Gamma_0(2)$ are fixed by the Fricke involution the set of groups between them is also fixed, but the individual groups need not be.
\begin{cor}\label{gen2}
For every subgroup $\Delta$ such that $\Gamma_2(2,4)^{J_2}\subset\Delta\subset\Gamma_0(4)$ the projective variety $\op{Proj}(A(\Delta))$ has a degree 8 endomorphism.
\end{cor}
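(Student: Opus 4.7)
The plan is to mimic the argument that produced Corollary~\ref{gen1} from Theorem~\ref{prop gamma02}, but now replacing the squaring map on the $f_a$ by the squaring map on the $\vartheta_b$ and invoking Theorem~\ref{prop gamma04} in place of Theorem~\ref{prop gamma02}. In effect, the Fricke involution has transported the whole picture used in the proof of Corollary~\ref{gen1} to the set of groups between $\Gamma_2(2,4)^{J_2}$ and $\Gamma_0(4)$, and the same formal manipulations should go through.

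First, given $\Delta$ with $\Gamma_2(2,4)^{J_2}\subset\Delta\subset\Gamma_0(4)$, set $H':=\Delta/\Gamma_2(2,4)^{J_2}\subset G'$. Theorem~\ref{prop gamma04} supplies a group $\Delta'$ with $\Gamma_2^2(2,4)\subset\Delta'\subset\Gamma_0(2)$, $\Delta'/\Gamma_2^2(2,4)\cong H'$, and an isomorphism of graded rings
\[\Psi_{H'}\colon A(\Delta,v_\vartheta)\xrightarrow{\;\sim\;}A_\NN(\Delta',v_\vartheta^2).\]
Because $A_\NN(\Delta',v_\vartheta^2)=A(\Delta',v_\vartheta^2)^{(2)}$ and $\op{Proj}(A(\Gamma,v))\cong\op{Proj}(A(\Gamma,v)^{(d)})$ (Section~\ref{sec: modular forms}), $\Psi_{H'}$ induces an isomorphism of Satake compactifications $\overline{\HH_2/\Delta'}\xrightarrow{\sim}\overline{\HH_2/\Delta}$.

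Next I would realise the endomorphism~\eqref{square} of $\PP^3$ as a degree $8$ morphism $\overline{\HH_2/\Delta}\to\overline{\HH_2/\Delta'}$. At the ``top'' level one has $A(\Gamma_2(2,4)^{J_2},v_\vartheta)=\CC[\vartheta_{00},\vartheta_{01},\vartheta_{10},\vartheta_{11}]$ and, by Riemann's identities~\eqref{riemann relation}-\eqref{riemann relation m1} combined with the Proposition preceding this corollary, $A_\NN(\Gamma_2^2(2,4),v_\vartheta^2)=\CC[\vartheta_b^2]=\CC[f_a^2]$. Thus $\vartheta_b\mapsto\vartheta_b^2$ is a degree $8$ map $\PP^3\to\PP^3$. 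The paragraph immediately preceding Theorem~\ref{prop gamma04} asserts that $\varphi'\colon G\to G'$ intertwines the actions of these groups on $\CC[\vartheta_b]$ and on $\CC[\vartheta_b^2]$, so that $\vartheta_b\mapsto\vartheta_b^2$ is $H'$-equivariant. Restricting to $H'$-invariants therefore yields an injection $\CC[\vartheta_b]^{H'}\hookleftarrow\CC[\vartheta_b^2]^{H'}$, i.e. $A(\Delta,v_\vartheta)\hookleftarrow A_\NN(\Delta',v_\vartheta^2)$, and hence on Proj a morphism $\sigma\colon\overline{\HH_2/\Delta}\to\overline{\HH_2/\Delta'}$. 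Composing $\sigma$ with $\Psi_{H'}^{\,\ast}\colon\overline{\HH_2/\Delta'}\xrightarrow{\sim}\overline{\HH_2/\Delta}$ produces the desired self-map.

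The one nontrivial point is that $\sigma$ still has degree $8$. This should follow from the elementary observation that in the commutative square
\[\begin{array}{ccc}\PP^3&\xrightarrow{\;[\cdot]^2\;}&\PP^3\\ \downarrow\!\scriptstyle q&&\downarrow\!\scriptstyle q'\\ \overline{\HH_2/\Delta}&\xrightarrow{\;\sigma\;}&\overline{\HH_2/\Delta'}\end{array}\]
the two vertical quotient maps are both of degree $|H'|$, while the top horizontal map has degree $8$; multiplicativity of degrees along $q'\circ[\cdot]^2=\sigma\circ q$ then forces $\deg\sigma=8$. Equivalently, $\CC[\vartheta_b]$ is a free $\CC[\vartheta_b^2]$-module of rank $8$ compatibly with the $H'$-action, so the extension of invariant rings retains rank $8$. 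This bookkeeping is the only step that is not purely formal; everything else is a direct transcription of the Corollary~\ref{gen1} argument into the $J_2$-twisted setting.
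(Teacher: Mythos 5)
Your proposal is correct and follows the same route the paper intends: Corollary~\ref{gen2} is stated as a direct consequence of Theorem~\ref{prop gamma04} in exactly the way Corollary~\ref{gen1} follows from Theorem~\ref{prop gamma02}, namely by composing the degree~8 squaring map $\op{Proj}(A(\Delta))\to\op{Proj}(A(\Delta'))$ induced by $\vartheta_b\mapsto\vartheta_b^2$ with the isomorphism coming from $\Psi_{H'}$. Your extra bookkeeping on the degree via the commutative square is sound and merely makes explicit what the paper leaves implicit.
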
 

\section{Higher dimensions, the genus 3 case}

The arguments of Section~\ref{section2} do not generalize directly to the genus three case. 
The first key point in the genus two case is that there is a map
\[\psi:\overline{\HH_2/\Gamma_2(2,4)} \to  \overline{\HH_2/\Gamma_2^2(2,4)}\]  
which is actually the endomorphism of $\PP^3$ given by 
$
[x_0, x_1, x_2, x_3] \mapsto  [x_0^2, x_1^2, x_2^2, x_3^2]$.

Similar to what we have done in genus two, we define the group 
\[\Gamma_3^2(2,4)=\{\gamma\in\Gamma_3\,|\,
\gamma\equiv\sm{1_2&*}{0&1_2} \text{ (mod 2)}
,\,\diag(2B)\equiv\diag(C)\equiv0\text{ (mod 4)}\}.\]
We will show that both $\overline{\HH_3/\Gamma_3(2,4)}$ and $\overline{\HH_3/\Gamma_3^2(2,4)}$ are not unirational, therefore a map between these two modular varieties is not a map between two projective spaces. A necessary condition for unirationality is that there are no non-trivial holomorphic differential forms in any degree. Exploiting the construction of holomorphic differential forms by means of gradients of odd theta functions introduced in~\cite{sm}, we will show that $\Omega^5(\HH_3)^{\Gamma_3(2,4)}$ and $\Omega^5(\HH_3)^{\Gamma_3^2(2,4)}$ are not trivial. 

Given a matrix $M=(m_1,\dots,m_{g-1})\in M_{2g\times(g-1)}$ of odd theta characteristics with $m_i\neq m_j$ for $1\le i<j\le g-1$ define
\[W(M)(\tau)=\pi^{-2g+2}\;(\psi_{m_1}(\tau)\wedge\cdots\wedge\psi_{m_{g-1}}(\tau)){}^t(\psi_{m_1}(\tau)\wedge\cdots\wedge\psi_{m_{g-1}}(\tau)),\]
where $\psi_{m_i}(\tau)=\op{grad}_z\theta_{m_i}(\tau,0)$, $1\le i\le g-1$. The form $W(M)(\tau)$ does not vanish identically because of the non-vanishing of the Jacobian determinant $\psi_{m_1}(\tau)\wedge\cdots\wedge\psi_{m_g}(\tau)$ whenever we deal with distinct odd characteristics $m_i$ (cf.~\cite{sm}).
For $\gamma\in\Gamma_g$ the following transformation formula holds:
\begin{equation*}
W(\gamma\cdot M)(\gamma\cdot\tau)=\chi_M(\gamma)\op{det}(C\tau+D)^{g+1}\,{}^t(C\tau+D)^{-1}W(M)(\tau)(C\tau+D)^{-1},
\end{equation*}
where 
\[\chi_M(\gamma)=\kappa(\gamma)^{2g-2}\texttt{e}\left(2\textstyle\sum_{i=1}^{g-1}\varphi_{m_i}(\gamma)\right),\] 
with $\texttt{e}(t)=e^{2\pi it}$ and $\varphi_{m_i}(\gamma)$ defined as in~\eqref{phi_m}.
Denoting by $\rho_1$ the irreducible representation of $\op{GL}(g,\CC)$ with highest weight $(g+1,\dots,g+1,g-1)$ (cf. Section~\ref{diff_form}), it is easily seen that $W(M)$ is in $[\Gamma_g(2,4),\rho_1]$ for $g$ odd while it is in $[\Gamma_g^*(2,4),\rho_1]$ for $g$ even, where $\Gamma_g^*(2,4):=\left\lbrace \gamma\in\Gamma_g(2,4)\, |\,\kappa(\gamma)^2=1 \right\rbrace$.

By means of the description of a holomorphic differential form of degree $N-1$ in terms of a suitable vector valued modular form given in formula~\eqref{omega}, it follows in particular that $\Omega^5(\HH_3)^{\Gamma_3(2,4)}$ is non-trivial and so $\overline{\HH_3/\Gamma_3(2,4)}$ is not unirational. Actually in this way one can build at least ${28\choose 2}=378$ non-trivial holomorphic differential forms on $\HH_3$ invariant under the action of $\Gamma_3(2,4)$, each coming from a $W(M)$ where $M$ is a matrix of two distinct odd characteristics.

\begin{thm}
The space $\Omega^5(\HH_3)^{\Gamma_3^2(2,4)}$ is not trivial and so $\overline{\HH_3/\Gamma_3^2(2,4)}$ is not unirational.
\end{thm}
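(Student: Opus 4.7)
By the isomorphism $\Omega^{5}(X^{0}_{\Gamma})\cong[\Gamma,\rho_1]$ recalled in Section~\ref{diff_form}, it suffices to exhibit a non-zero vector-valued modular form in $[\Gamma_3^2(2,4),\rho_1]$. My plan is to produce one by averaging one of the forms $W(M)$ of~\cite{sm} over the finite quotient $G:=\Gamma_3^2(2,4)/\Gamma_3(2,4)$. A direct check from the defining conditions shows that $\Gamma_3(2,4)$ is normal in $\Gamma_3^2(2,4)$ with quotient $G\cong\FF_2^3$, generated by the three translations $\gamma_{B_{ij}}=\sm{1_3&B_{ij}}{0&1_3}$ ($1\le i<j\le 3$), where $B_{ij}$ is the symmetric $3\times 3$ matrix with $1$'s in positions $(i,j),(j,i)$ and zeros elsewhere. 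Since $g=3$ is odd, each $W(M)$ with $M=(m_1,m_2)$ a pair of distinct odd characteristics lies in $[\Gamma_3(2,4),\rho_1]$, and so $[\Gamma_3^2(2,4),\rho_1]=[\Gamma_3(2,4),\rho_1]^{G}$.

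Using $\kappa(\gamma_S)^4=1$ for $C=0$ and the identity $\varphi_m(\gamma_S)=-\tfrac18\,{}^tm'Sm'$ when $\diag(S)=0$, the cocycle $\chi_M$ restricted to $G$ takes the explicit form
\[
\chi_M(\gamma_{B_{ij}})=(-1)^{(m_1)'_i(m_1)'_j+(m_2)'_i(m_2)'_j}.
\]
Because the action $\gamma_{B_{ij}}\cdot\ch{m'}{m''}=\ch{m'}{m''+B_{ij}m'}$ fixes every $m'$-part, $\chi_M|_G$ is a character of $G$ depending only on $(m_1',m_2')$, and the Reynolds operator
\[
\mathcal{R}(W(M))=\frac{1}{|G|}\sum_{\gamma\in G}\chi_M(\gamma)\,W(\gamma\cdot M)
\]
produces an element of $[\Gamma_3^2(2,4),\rho_1]$. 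I would then take $M_0=(m_1,m_2)$ with $m_1=\ch{100}{100}$ and $m_2=\ch{010}{010}$; both are odd, the character $\chi_{M_0}$ is trivial (each $m_k'$ has a single non-zero entry), and the stabiliser condition $Sm_1'\equiv Sm_2'\equiv 0\pmod 2$ forces every off-diagonal entry of $S$ to vanish, so the $G$-orbit of $M_0$ has the maximal size $|G|=8$ and
\[
|G|\cdot\mathcal{R}(W(M_0))=\sum_{N\in G\cdot M_0}W(N)
\]
is an equally-weighted sum of eight distinct $W(N)$'s of the form $(\ch{100}{*},\ch{010}{*})$.

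The remaining and main obstacle is to prove that this sum does not vanish identically. I would settle it by an asymptotic analysis at the cusp $\tau=iT\cdot 1_3$ as $T\to\infty$. The minimisers of $\|n+n_k'/2\|^2$ over $n\in\ZZ^3$ for $n_k'=e_k$ lie on the line $\ZZ e_k$, and since oddness forces $(n_k'')_k=1$ the leading term of the series for $\psi_{n_k}(\tau)$ is $-2\pi e^{-\pi T/4}e_k$, independent of the remaining entries of $n_k''$. Thus every $v_N:=\psi_{n_1}(\tau)\wedge\psi_{n_2}(\tau)$ has the same leading asymptotic $4\pi^2 e^{-\pi T/2}(e_1\wedge e_2)$, and each rank-one tensor $W(N)=\pi^{-4}v_N\,{}^tv_N$ asymptotically equals $16\,e^{-\pi T}(e_1\wedge e_2)\,{}^t(e_1\wedge e_2)$. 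Summing over the orbit yields $128\,e^{-\pi T}(e_1\wedge e_2)\,{}^t(e_1\wedge e_2)\neq 0$ to leading order, which establishes the required non-vanishing and hence the theorem: the non-zero element $\mathcal{R}(W(M_0))\in[\Gamma_3^2(2,4),\rho_1]$ gives a non-zero class in $\Omega^5(\HH_3)^{\Gamma_3^2(2,4)}$, obstructing unirationality of $\overline{\HH_3/\Gamma_3^2(2,4)}$.
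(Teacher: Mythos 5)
There is a genuine gap, and it is located exactly where your argument has to be most careful: the identification of the quotient $G=\Gamma_3^2(2,4)/\Gamma_3(2,4)$. You take $G\cong\FF_2^3$, generated only by the three off-diagonal translations $\gamma_{B_{ij}}$. But $\Gamma_3(2,4)$ also imposes $\diag(B)\equiv0\pmod 4$, while $\Gamma_3^2(2,4)$ only imposes $\diag(2B)\equiv0\pmod 4$, i.e.\ $\diag(B)\equiv 0\pmod 2$. Hence the three diagonal translations $\gamma_{S}$ with $S=\diag(0,\dots,2,\dots,0)$ lie in $\Gamma_3^2(2,4)\setminus\Gamma_3(2,4)$ and represent further nontrivial classes: the quotient is $\FF_2^6$ of order $64$ (the paper's proof lists all six generators $B_1,\dots,B_6$), not $\FF_2^3$ of order $8$. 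Averaging only over your subgroup produces a form invariant under an index-$8$ subgroup of $\Gamma_3^2(2,4)$, not under $\Gamma_3^2(2,4)$ itself, so even if nonzero it does not give an element of $[\Gamma_3^2(2,4),\rho_1]$ and does not prove the theorem.

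The omission is not repairable by simply enlarging the sum while keeping your test case. For $S=\diag(2,0,0)$ one computes from~\eqref{phi_m} that $\varphi_m(\gamma_S)=-\tfrac14(m'_1)^2+\tfrac12 m'_1$, so $\chi_M(\gamma_S)=(-1)^{(m_1)'_1+(m_2)'_1}$ while the action on characteristics is trivial ($Sm'\equiv\diag(S)\equiv0\pmod 2$). For your $M_0=\left(\ch{100}{100},\ch{010}{010}\right)$ this factor is $-1$, and it is $-1$ on every member of your orbit (the translations never change the $m'$-parts), so your partial average is \emph{anti}-invariant under $\gamma_{\diag(2,0,0)}$; the full average over the correct group $\FF_2^6$ annihilates it. This is consistent with the paper's computation that $\Phi(M)\not\equiv 0$ only for the $42$ pairs with $m_1'=m_2'$ — your $M_0$ has $m_1'\neq m_2'$ and is precisely one of the cases that die. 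To salvage the argument you must average over all of $\FF_2^6$ and choose $M$ with $m_1'=m_2'$ (e.g.\ the paper's example with $m_1'=m_2'=(0,0,1)$). Your asymptotic expansion of $\psi_m$ at $\tau=iT\,1_3$ is a sound idea and, applied to such a corrected candidate, could replace the paper's symbolic verification of non-vanishing; but as written the proof establishes invariance under the wrong group and non-vanishing of the wrong object.
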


\proof
A way to construct vector valued modular forms that can be used to define holomorphic differential forms on $\HH_3$ invariant under $\Gamma_3^2(2,4)$ is to symmetrize suitable $W(M)$ and check that the resulting vector valued modular form does not vanish identically. 

Given $M=(m_1,m_2)\in M_{6,2}(\FF_2)$, consider
\begin{equation}\label{Phi}
\begin{split}
\Phi(M)(\tau)&=\sum_{\gamma\in\Gamma_3^2(2,4)/\Gamma_3(2,4)}W(M)_{|_{\gamma,\rho_1}}(\tau)\\
&=\sum_{\gamma\in\Gamma_3^2(2,4)/\Gamma_3(2,4)}\kappa(\gamma)^4\,
\texttt{e}\left(2\textstyle\varphi_{n_1}(\gamma)+2\varphi_{n_2}(\gamma)\right)W(N)(\tau),
\end{split}
\end{equation}
where $N=(n_1,n_2)$ with $n_i=\gamma^{-1}\cdot m_i$, $i=1,2$. If well defined and not identically zero, $\Phi(M)$ is a vector valued modular form with respect to $\Gamma_3^2(2,4)$ and the representation $\rho_1$ by construction.

From section~\ref{sec: theta con} we know that $\kappa(\gamma)^4=(-1)^{\op{Tr}({}^tBC)}$ for $\gamma\in\Gamma_g$. It is easily seen that a set of generators for the quotient is given by the classes of the matrices $M_1,\dots,M_6$, where $M_i=\sm{1_3&B_i\\}{0&1_3} $ and
\begin{align*}
B_1=\left(\begin{smallmatrix}
2&0&0\\
0&0&0\\
0&0&0
\end{smallmatrix}\right),\,
B_2=\left(\begin{smallmatrix}
0&0&0\\
0&2&0\\
0&0&0
\end{smallmatrix}\right),\,
B_3=\left(\begin{smallmatrix}
0&0&0\\
0&0&0\\
0&0&2
\end{smallmatrix}\right),\\
B_4=\left(\begin{smallmatrix}
0&1&0\\
1&0&0\\
0&0&0
\end{smallmatrix}\right),\,
B_5=\left(\begin{smallmatrix}
0&0&1\\
0&0&0\\
1&0&0
\end{smallmatrix}\right),\,
B_6=\left(\begin{smallmatrix}
0&0&0\\
0&0&1\\
0&1&0
\end{smallmatrix}\right).
\end{align*} 
Thus, the sum in~\eqref{Phi} is finite and $\Phi(M)$ is well defined. Moreover, from the set of generators we can explicitly construct the group $\Gamma_3^2(2,4)/\Gamma_3(2,4)$ and compute~\eqref{Phi} in order to see if there are choices of the matrix $M$ such that $\Phi(M)$ does not vanish identically.

A direct computation in \emph{Mathematica} shows that there are only 42 (from the 378 we started with) choices of the matrix $M$ such that $\Phi(M)(\tau)$ does not vanish identically, exactly the ones such that if $M=\sm{m_1'&m_2'\\}{m_1''&m_2''}$ then $m_1'=m_2'$. For instance, take 
\[M=\left(\begin{smallmatrix}
0&0\\
0&0\\
1&1\\
0&0\\
0&1\\
1&1
\end{smallmatrix}\right),\]
then \[\Phi(M)(\tau)=16\sum_{i=1}^4W(N_i)(\tau),\]
where $N_1=M$ and
\begin{align*}
N_2=\left(\begin{smallmatrix}
0&0\\
0&0\\
1&1\\
0&0\\
1&0\\
1&1
\end{smallmatrix}\right),\;
N_3=\left(\begin{smallmatrix}
0&0\\
0&0\\
1&1\\
1&1\\
0&1\\
1&1
\end{smallmatrix}\right),\;
N_4=\left(\begin{smallmatrix}
0&0\\
0&0\\
1&1\\
1&1\\
1&0\\
1&1
\end{smallmatrix}\right).
\end{align*} 
\endproof

In fact, it is not even possible to construct a map by ``squaring coordinates'' as in the genus two case. We will show that the coordinate ring of $\overline{\HH_3/\Gamma_3^2(2,4)}$ is not generated by squares of elements of the coordinate ring of $\overline{\HH_3/\Gamma_3(2,4)}$. 

The main difference from the genus two case is that in genus three there is a non-trivial algebraic relation between second order theta constants. Indeed by~\cite{runge1} we know that
\[A(\Gamma_3(2,4),v_\Theta)=\CC[f_a]/(R_{16}),\]
where 
\begin{equation*}
R_{16}=P_8(f_{000}^2,\dots,f_{111}^2)+q\cdot Q_4(f_{000}^2,\dots,f_{111}^2),
\end{equation*}
with $P_8$ and $Q_4$ polynomials in the $f_a^2$ of degree 8 and 4 respectively and $q=\prod_{a\in\FF_2^3}f_a$.
Its expression is simpler in terms of theta constants, namely
\[R_{16}=2^3\sum_{m\text{ even}}\vartheta_m^{16}(\tau)-\Big(\sum_{m\text{ even}}\vartheta_m^8(\tau)\Big)^2.\]
One can move from one expression to the other by means of the identities~\eqref{riemann relation} and \eqref{riemann relation m1}, recovering in this way the explicit expression of the polynomials $P_8$ and $Q_4$.

It is easily checked that $q\in A(\Gamma_3^2(2,4),v_\Theta^2)$ so this ring contains
\[R:=\CC[f_a^2,q]/(P_8+q\cdot Q_4,q^2=\textstyle{\prod_a}f_a^2).\]
By Serre's criterion~\cite{G} it is easy to show that $R$ is normal. Since $R$ is a complete intersection ring it is Cohen-Macaulay, so all that remains is to verify that it is regular in codimension 1, which we do by computer using \emph{Macaulay2}~\cite{m2}.
So we conclude that $A(\Gamma_3^2(2,4),v_\Theta^2)$ is not generated by squares of elements of $A(\Gamma_3(2,4),v_\Theta^2)$.

In genera higher than three many algebraic relations appear between second order theta constants, so a possible interpretation of the map that squares the coordinates of a projective space of suitable dimension would need a deeper analysis. We can see how the genus two case is peculiar from this point of view, since the principal results of this paper cannot be generalized directly to higher genera.

\end{document}